\newenvironment{proof}
      {\medskip\noindent{\bf Proof.}\hspace{1mm}}
      {\hfill$\Box$\medskip}
\def\qed{\ifvmode\mbox{ }\else\unskip\fi\hskip 1em plus 10fill$\Box$}
\newtheorem{theorem}{Theorem}
\newtheorem{lemma}[theorem]{Lemma}
\newtheorem{corollary}[theorem]{Corollary}
\begin{document}

\title{Independent arithmetic progressions}
\author{David Conlon\thanks{Mathematical Institute, Oxford OX2 6GG, United Kingdom. Email: {\tt david.conlon@maths.ox.ac.uk}. Research supported by ERC Starting Grant 676632.}
\and 
Jacob Fox\thanks{Department of Mathematics, Stanford University, Stanford, CA 94305. Email: {\tt jacobfox@stanford.edu}. Research supported by a Packard Fellowship and by NSF Career Award DMS-1352121.}
\and 
Benny Sudakov\thanks{Department of Mathematics, ETH, 8092 Z\"urich, Switzerland. Email: {\tt benjamin.sudakov@math.ethz.ch}.
Research supported in part by SNSF grant 200021-175573.}}
\date{}

\maketitle

\begin{abstract}
We show that there is a positive constant $c$ such that any graph on vertex set $[n]$ with at most $c n^2/k^2 \log k$ edges contains an independent set of order $k$ whose vertices form an arithmetic progression. We also present applications of this result to several questions in Ramsey theory. 
\end{abstract}

A classical theorem of Tur\'an \cite{Tu} shows that any graph on $n$ vertices with less than $\frac{n(n-k+1)}{2(k-1)}$ edges contains an independent set of order $k$. The celebrated Szemer\'edi's theorem \cite{Sz} states that for $\delta>0$, $k \in \mathbb{N}$, and $n$ sufficiently large in terms of $k$ and $\delta$, any subset of $[n]=\{1,\ldots,n\}$ of order at least $\delta n$ contains a $k$-term arithmetic progression. Here we marry the themes of these results and deduce as consequences bounds on three other well-studied problems on rainbow arithmetic progressions and set mappings. 

Given a graph with vertex set $[n]$, a $k$-term arithmetic progression is said to be {\it independent} if it is an independent set in the graph. Our main result is a Tur\'an-type theorem, showing that any sparse graph on vertex set $[n]$ contains an independent arithmetic progression. Before proving this result, we need a standard estimate from number theory. Note that all logs will be taken to base $e$.

\begin{lemma} \label{lem:sieve}
There is a positive constant $\eta$ such that, for all $n \geq \eta^{-1} k \log k$, the number of integers from $[n]$ which are relatively prime to $1, 2, \dots, k$ is at least $\eta n/\log k$.
\end{lemma}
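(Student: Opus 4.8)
The plan is to first translate the coprimality condition into a statement about prime factors: an integer $m\in[n]$ is relatively prime to each of $1,2,\dots,k$ precisely when $m$ has no prime factor that is at most $k$. Writing $\Phi(n,k)$ for the number of such \emph{$k$-rough} integers in $[n]$, the goal is the lower bound $\Phi(n,k)\ge \eta n/\log k$. The heuristic main term is $n\prod_{p\le k}(1-1/p)$, and Mertens' theorem supplies $\prod_{p\le k}(1-1/p)\ge c/\log k$ for an absolute constant $c>0$; so the real content is to show that $\Phi(n,k)$ stays within a constant factor of this main term, uniformly in $n$. I would split the argument according to the size of $s:=\log n/\log k$.

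For the range where $s$ is large, say $n\ge k^{s_0}$ for a suitable absolute constant $s_0$, I would invoke the fundamental lemma of the sieve with sifting parameter $z=k$ applied to $[n]$. Since this sieve has dimension $1$ and the number of multiples of any $d$ in $[n]$ differs from $n/d$ by less than $1$, the accumulated remainder is bounded by the truncation level; taking $s_0$ large enough forces the multiplicative error in the fundamental lemma below $1/2$, yielding $\Phi(n,k)\ge \tfrac12 n\prod_{p\le k}(1-1/p)\ge \tfrac{c}{2}\,n/\log k$. This settles the large range provided $\eta\le c/2$.

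The small range $\eta^{-1}k\log k\le n<k^{s_0}$ cannot be treated by the sieve, since near $s=1$ the fundamental lemma is too lossy; indeed, for $n<k^2$ the only $k$-rough integers are $1$ and the primes in $(k,n]$. Here I would instead count primes directly: every prime in $(k,n]$ is $k$-rough, so by Chebyshev's bound $\pi(n)\ge c_1 n/\log n$ together with $\log n<s_0\log k$ one gets $\Phi(n,k)\ge \pi(n)-\pi(k)\ge \tfrac{c_1}{s_0}\,n/\log k-k$. The hypothesis $n\ge \eta^{-1}k\log k$ is exactly what is needed to absorb the subtracted term, as it gives $k\le \eta n/\log k$; hence $\Phi(n,k)\ge (c_1/s_0-\eta)\,n/\log k\ge \eta n/\log k$ once $\eta\le c_1/(2s_0)$.

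Taking $\eta=\min\{c/2,\ c_1/(2s_0)\}$, and absorbing the finitely many small values of $k$ (where both bounds are trivial) into the choice of $\eta$, completes the proof. I expect the large-$n$ regime to be the main obstacle: obtaining a clean constant-factor lower bound uniform in $n$ hinges on controlling the sieve error, and it is precisely the breakdown of the sieve near $s=1$ that forces the separate, and fortunately elementary, prime-counting argument in the small range. Alternatively, one could cite directly the known uniform estimate $\Phi(n,k)\gg n/\log k$ for $n\ge k$ coming from the asymptotics of the Buchstab function, which packages both ranges at once.
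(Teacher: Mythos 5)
Your proof is correct, but it takes a different route from the paper. The paper applies a single uniform black box: Buchstab's asymptotic $\Phi(x,y) = \frac{w(u)x}{\log y} - \frac{y}{\log y} + O(x/\log^2 x)$ with $y = x^{1/u}$ (Montgomery--Vaughan, Section 7.2), which, since the Buchstab function satisfies $w(u) \geq 1/2$ for all $u > 1$, gives the lower bound in one stroke for $k \geq k_0$ and $n \geq k\log k$; small $k$ is then handled by shrinking $\eta$, exactly as in your final sentence (and indeed your closing remark that one could ``cite directly the known uniform estimate coming from the asymptotics of the Buchstab function'' is precisely what the paper does). You instead decompose by the size of $u = \log n/\log k$ and use two more standard tools: the fundamental lemma of the sieve plus Mertens for $n \geq k^{s_0}$, and Chebyshev's bound on $\pi(n) - \pi(k)$ for $\eta^{-1}k\log k \leq n < k^{s_0}$, with the hypothesis $n \geq \eta^{-1}k\log k$ used exactly where it must be, to absorb $\pi(k) \leq k \leq \eta n/\log k$. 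Both ranges check out; the only point worth tightening is in the large range, where the truncation level $D = k^{s}$ must simultaneously be large enough that the multiplicative loss $O(e^{-s})$ is below $1/2$ and small enough (say $D \leq n^{1/2}$, guaranteed by $s_0 \geq 2s$) that the additive remainder $O(D)$ is $o(n/\log k)$ — your phrasing slightly conflates these two constraints, but fixing $s$ as an absolute constant and then choosing $s_0$ resolves it. The trade-off: your argument rests on more elementary and widely known inputs and makes transparent why the threshold $n \gtrsim k\log k$ is exactly what the prime-counting range requires, whereas the paper's citation of Buchstab is shorter and avoids the case split entirely at the cost of invoking a sharper theorem.
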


\begin{proof}
Writing $\Phi(x, y)$ for the number of integers less than or equal to $x$ all of whose prime factors are greater than $y$, a result of Buchstab (see Section 7.2 of~\cite{MV06}) says that
\[\Phi(x,y) = \frac{w(u) x}{\log y} - \frac{y}{\log y} + O\left(\frac{x}{\log^2 x}\right),\]
where $u$ is defined by $y = x^{1/u}$ and $w(u)$ is the Buchstab function, equal to $1/u$ for $1 < u \leq 2$ and asymptotic to $e^{-\gamma}$, with $\gamma$ the Euler--Mascheroni constant, as $u$ tends to infinity. For $k$ sufficiently large, say $k \geq k_0$, and $n \geq k \log k$, the required estimate with $\eta = 1/10$ easily follows by applying this result with $x = n$ and $y = k$. For $k < k_0$, the estimate follows by choosing $\eta$ such that $\eta^{-1} \geq \max(20 \log k_0, k_0 \log k_0)$. Then $n \geq k_0 \log k_0$, so that $\Phi(n, k) \geq \Phi(n, k_0) \geq n/10 \log k_0 \geq \eta n/\log k$.
\end{proof}

Our main result, which is tight up to the logarithmic factor, is now as follows.

\begin{theorem}\label{thm:APmain}
There is a positive constant $\varepsilon$ such that any graph $G$ on $[n]$ with less than $\varepsilon \frac{n^2}{k^2 \log k}$ edges contains a $k$-term independent arithmetic progression.
\end{theorem}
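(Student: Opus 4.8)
The plan is to produce a large family $\mathcal{F}$ of $k$-term arithmetic progressions in $[n]$ with the property that every edge of $G$ lies inside very few members of $\mathcal{F}$; a double-counting argument then forces some member of $\mathcal{F}$ to be edge-free, hence independent. The progressions I would use are exactly those whose common difference is coprime to each of $1,2,\dots,k$ (equivalently, has all prime factors exceeding $k$), which is precisely the quantity estimated in Lemma~\ref{lem:sieve}; call such a difference \emph{good}. Concretely, assuming $n$ is at least a suitable constant multiple of $k^2\log k$, let $\mathcal{F}$ consist of all progressions $T_{a,d}=\{a,a+d,\dots,a+(k-1)d\}\subseteq[n]$ with $d$ good and $d\le n/(2k)$. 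For each such $d$ there are $n-(k-1)d>n/2$ admissible starting points $a$, and applying Lemma~\ref{lem:sieve} with $n$ replaced by $\lfloor n/(2k)\rfloor$ (which is where the hypothesis $n\gtrsim k^2\log k$ is used) shows that the number of good $d\le n/(2k)$ is at least $\tfrac{\eta}{2}\cdot\tfrac{n}{k\log k}$. Hence $|\mathcal{F}|\ge\tfrac{\eta}{4}\cdot\tfrac{n^2}{k\log k}$.

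The crucial point, and the step I expect to be the main obstacle, is a bounded-multiplicity estimate: every pair $\{x,y\}$, and hence every edge of $G$, lies in at most $k$ members of $\mathcal{F}$. To prove it, set $m=|x-y|$ and factor $m=m_{\le k}\,m_{>k}$, where $m_{\le k}$ collects the prime factors of $m$ that are at most $k$ and $m_{>k}$ those exceeding $k$. If $T_{a,d}$ contains both $x$ and $y$, then $d\mid m$ and $m/d\le k-1$. Since $d$ is good, all its prime factors exceed $k$, so $d\mid m_{>k}$; writing $m/d=m_{\le k}\cdot(m_{>k}/d)$ and noting that every nontrivial divisor of $m_{>k}$ exceeds $k$, the constraint $m/d\le k-1$ forces $m_{>k}/d=1$. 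Thus $d=m_{>k}$ is the \emph{only} good common difference that can occur for this pair, and for this single value there are at most $k$ choices of $a$ placing both $x$ and $y$ in the progression. This is exactly where coprimality to $1,\dots,k$ does the work: it collapses the potentially many divisors of $m$ down to a single admissible step, avoiding the uncontrolled divisor-function losses that would doom a naive count over all progressions.

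With these two facts the theorem follows by double counting. The number of members of $\mathcal{F}$ meeting at least one edge is at most $k\,|E(G)|<k\cdot\varepsilon\tfrac{n^2}{k^2\log k}=\varepsilon\tfrac{n^2}{k\log k}$, and choosing $\varepsilon=\eta/8$ makes this strictly smaller than $|\mathcal{F}|\ge\tfrac{\eta}{4}\cdot\tfrac{n^2}{k\log k}$. Consequently some $T_{a,d}\in\mathcal{F}$ contains no edge of $G$ and is the desired independent $k$-term arithmetic progression. Finally, I would dispose of the complementary small-$n$ range, where Lemma~\ref{lem:sieve} cannot be applied to $\lfloor n/(2k)\rfloor$, by a short separate argument: in that regime the edge bound $\varepsilon n^2/(k^2\log k)$ is only a small multiple of $n$, and one can run the same double-counting scheme restricted to the available good differences (for the smallest cases, the consecutive blocks $d=1$, of which there are $n-k+1$, each meeting an edge $\{x,x+m\}$ only when $m\le k-1$ and then in at most $k$ blocks) to conclude.
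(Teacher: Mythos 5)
Your main case is correct and follows the paper's proof essentially verbatim: the same family of progressions whose common difference is coprime to $1,\dots,k$ and at most $n/2k$, the same lower bound $\eta n^2/(4k\log k)$ on their number via Lemma~\ref{lem:sieve} applied to $[n/2k]$, and the same double count. Your multiplicity argument via the factorization $m=m_{\le k}m_{>k}$ is a clean, fully explicit version of the paper's one-line assertion that a pair of integers lies in progressions with at most one common difference in $X$; that part is fine.

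The gap is the complementary range $n<2\eta^{-1}k^2\log k$, which you dispose of in two sentences, and the mechanism you sketch does not cover it. Any good $d>1$ has a prime factor exceeding $k$, hence $d>k$; so the good differences at most $n/2k$ are exactly $d=1$ together with the primes in $(k,n/2k]$. For $n\approx 2k^2$ this interval is empty and only $d=1$ survives, but the $d=1$ count fails there: the number of blocks is $n-k+1\approx 2k^2$, while the edge budget is about $4\varepsilon k^2/\log k$, and already a matching of $4k$ well-spaced edges of the form $\{i,i+1\}$ meets every length-$k$ block. Consecutive blocks thus only handle $n\lesssim \varepsilon^{-1}k\log k$, and good differences up to $n/2k$ only become plentiful once $n$ is comfortably beyond $4k^2$; the window in between is uncovered. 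The paper closes this case with a genuinely separate argument: it counts $k$-term progressions with prime difference at most $n/2k$, getting at least $\pi(n/2k)\cdot n/2\gtrsim n^2/\bigl(k\log(n/2k)\bigr)$ of them, and uses the key observation that $n<2\eta^{-1}k^2\log k\le k^{C}$ forces $\log(n/2k)=O(\log k)$, so this count still dominates $e(G)k$. To repair your version along your own lines you would need to allow differences up to roughly $n/k$ rather than $n/2k$ so that the primes in $(k,n/k]$ are available throughout the window, invoke a Chebyshev-type lower bound for their number, and balance that against the reduced number of starting points (restricting to primes greater than $k$ also keeps your uniqueness-of-$d$ argument intact, which is not automatic if one admits all primes). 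None of this is present in the proposal, so the small-$n$ case must be counted as missing.
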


\begin{proof} 
We split into two cases, depending on the size of $n$. For $n \geq 2 \eta^{-1} k^2 \log k$, where $\eta$ is as in Lemma~\ref{lem:sieve}, we consider the set of integers $X$ which are relatively prime to $1,2,\ldots,k$ and let $\mathcal{A}$ be the set of $k$-term arithmetic progressions in $[n]$ whose difference is in $X$. We can form an arithmetic progression in $\mathcal{A}$ by choosing the first term from $[n/2]$ and the common difference from $X \cap [n/2k]$. Therefore, since $n/2k \geq \eta^{-1} k \log k$, Lemma~\ref{lem:sieve} applies to show that $|\mathcal{A}| \geq \eta n^2/4k \log k$. Each pair of integers are in arithmetic progressions with at most one common difference in $X$ and, hence, are in at most $k-1$ arithmetic progressions in $\mathcal{A}$. Thus, the number of arithmetic progressions in $\mathcal{A}$ which contain an edge of $G$ is at most $e(G)k$. Taking $\varepsilon < \eta/8$, we have that $e(G)k <  \varepsilon \frac{n^2}{k \log k} < |\mathcal{A}|$, so there is an arithmetic progression in $\mathcal{A}$ which forms an independent set. 

For the second case, when $n < 2 \eta^{-1} k^2 \log k$, we let $\mathcal{B}$ be the set of $k$-term arithmetic progressions in $[n]$ whose difference is a prime. By the same argument as in the previous case, the number of arithmetic progressions in $\mathcal{B}$ which contain an edge of $G$ is at most $e(G)k < \varepsilon \frac{n^2}{k \log k}$. On the other hand, the number of progressions in $\mathcal{B}$ is at least $\pi(n/2k) n/2$, where $\pi(x)$ is the prime counting function. Since there exist positive constants $a$ and $C$ such that $\pi(x) > a \frac{x}{\log x}$ and $2 \eta^{-1} k^2 \log k < k^C$, we have that $\pi(n/2k) n/2 > \frac{a}{2C} \frac{n^2}{k \log k}$. Therefore, for $\varepsilon < a/2C$, there is an independent arithmetic progression.
\end{proof}

In a coloring of $[n]$, an arithmetic progression is {\it rainbow} if its elements are all different colors. The {\it sub-Ramsey number $sr(m,k)$} is the minimum $n$ such that every coloring of $[n]$ in which no color is used more than $m$ times has a 
rainbow $k$-term arithmetic progression. Alon, Caro, and Tuza \cite{ACT} proved that there are constants $c,c'>0$ such that $$c'\frac{mk^2}{\log mk} \leq sr(m,k) \leq c mk^2 \log (mk).$$ They also showed that there is an upper bound on $sr(m,k)$ 
which is linear in $m$ but with a worse dependence on $k$, namely, $sr(m,k) \leq cmk^3$. The lower bound was later improved by Fox, Jungi\'c, and Radoi\v ci\' c \cite{FJR} to $sr(m,k) \geq c'mk^2$. Here we improve on the upper bounds of Alon, Caro, and Tuza \cite{ACT}. 

\begin{corollary}\label{subRamsey}
There is a constant $c$ such that the sub-Ramsey number satisfies 
$$sr(m,k) \leq cmk^2 \log k.$$
\end{corollary}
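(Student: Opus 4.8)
The plan is to deduce this directly from Theorem~\ref{thm:APmain} by encoding the coloring as a graph in which rainbow progressions become independent ones. Given a coloring of $[n]$ in which no color is used more than $m$ times, I would define a graph $G$ on vertex set $[n]$ by joining $i$ and $j$ precisely when they receive the same color. An independent set in $G$ is then exactly a set of vertices all of whose colors are distinct, so a $k$-term independent arithmetic progression in $G$ is exactly a rainbow $k$-term arithmetic progression in the coloring. It therefore suffices to make $e(G)$ small enough to invoke Theorem~\ref{thm:APmain}.

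Next I would bound $e(G)$ using the hypothesis on the color classes. If the color classes have sizes $s_1, s_2, \ldots$, then $\sum_c s_c = n$ and each $s_c \le m$, so
\[
e(G) = \sum_c \binom{s_c}{2} \le \frac{m}{2}\sum_c s_c = \frac{mn}{2}.
\]
This crude counting already turns out to be strong enough for the claimed bound, which is the point worth checking.

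Finally, I would choose $n$ large enough that this edge count falls below the threshold of Theorem~\ref{thm:APmain}. With $\varepsilon$ as in that theorem, the inequality $\tfrac{mn}{2} < \varepsilon \tfrac{n^2}{k^2\log k}$ holds as soon as $n > \tfrac{mk^2\log k}{2\varepsilon}$. For such $n$ the graph $G$ has fewer than $\varepsilon n^2/(k^2\log k)$ edges, hence contains a $k$-term independent arithmetic progression, which is a rainbow one. Taking $c = 1/(2\varepsilon)$, and absorbing the strictness of the inequality and integrality of $n$ into the constant, yields $sr(m,k) \le cmk^2\log k$.

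As for the main difficulty, there is essentially no analytic obstacle here, since the heavy lifting has already been carried out in Theorem~\ref{thm:APmain}. The only step requiring genuine care is the reduction itself: verifying that the ``same color'' relation captures exactly the failure of the rainbow property, and confirming that the simple bound $e(G)\le mn/2$ suffices to reach the target dependence $mk^2\log k$ rather than something weaker. I expect this matching of parameters to be the most delicate point, though it is ultimately routine.
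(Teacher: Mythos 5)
Your proposal is correct and follows essentially the same argument as the paper: build the graph whose edges join same-colored pairs, bound its edge count by $mn/2$ via the clique decomposition into color classes, and choose $n$ on the order of $\varepsilon^{-1}mk^2\log k$ so that Theorem~\ref{thm:APmain} applies. The only cosmetic difference is that you bound $\sum_c\binom{s_c}{2}$ termwise rather than by noting the sum is maximized when each class has size $m$; both give the same estimate.
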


\begin{proof}
Consider a coloring of $[n]$ with $n=  \varepsilon^{-1} mk^2 \log k$, with $\varepsilon$ as in Theorem~\ref{thm:APmain}, where no color appears more than $m$ times. Define a graph on $[n]$ where two integers are adjacent if they receive the same color. The graph consists of a disjoint union of cliques of order at most $m$. Since the maximum of $\sum_i \binom{x_i}{2}$ under the constraint $\sum_i x_i$ occurs when each term is as large as possible, the number of edges in this graph is at most $\frac{n}{m}{m \choose 2}<\frac{nm}{2}$. Therefore, by our choice of $n$, the number of edges is such that Theorem \ref{thm:APmain} applies to give an independent $k$-term arithmetic progression, which is a rainbow arithmetic progression in our coloring of $[n]$. 
\end{proof}

Let $T_k$ denote the smallest positive integer $t$ such that for every positive integer $m$, every equinumerous $t$-coloring of $[tm]$ contains a rainbow $k$-term arithmetic progression. Jungi\'c et al.~\cite{JLMNR} proved that there are positive constants $c,c'$ such that $$c' k^2 \leq T_k \leq c k^3.$$ 
They conjectured that the lower bound is correct, that is, $T_k=\Theta(k^2)$, a problem which was reiterated in the survey~\cite{JNR}. 
Here we make progress on this conjecture, improving the upper bound to $c k^2 \log k$. Note that an equinumerous 
$t$-coloring of $[tm]$ uses each color exactly $m$  times, so $T_k$ is at most the maximum of $sr(m,k)/m$ over all positive integers $m$. Hence, by Corollary \ref{subRamsey}, we obtain the following corollary. 

\begin{corollary}
There is a constant $c$ such that
$$T_k \leq c k^2 \log k.$$
\end{corollary}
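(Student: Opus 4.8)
The plan is to derive this corollary directly from the sub-Ramsey bound of Corollary~\ref{subRamsey}, via the elementary comparison $T_k \le \max_m sr(m,k)/m$ already flagged in the text just above the statement. First I would make this comparison precise. Fix any integer $t \ge \max_m sr(m,k)/m$ and any positive integer $m$, and consider an arbitrary equinumerous $t$-coloring of $[tm]$; the goal is to locate a rainbow $k$-term arithmetic progression in it.

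The crux is getting the direction of the reduction right. Since $t \ge sr(m,k)/m$ we have $tm \ge sr(m,k)$, so the initial segment $[sr(m,k)]$ sits inside $[tm]$. Restricting the given coloring to this segment yields a coloring of $[sr(m,k)]$ in which every color is used \emph{at most} $m$ times; here I use that each color is used exactly $m$ times on all of $[tm]$, so it can only occur fewer times on a sub-interval. By the defining property of $sr(m,k)$, this restricted coloring contains a rainbow $k$-term arithmetic progression, and since its colors are inherited from the original coloring, the same progression is rainbow in $[tm]$. Hence every integer $t \ge \max_m sr(m,k)/m$ has the required property, so $T_k \le \lceil \max_m sr(m,k)/m\rceil$.

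It then remains only to insert the bound from Corollary~\ref{subRamsey}. That result gives $sr(m,k) \le c m k^2 \log k$ with $c$ independent of $m$, so $sr(m,k)/m \le c k^2 \log k$ uniformly in $m$, and the same bound therefore controls the maximum over $m$. Rounding up and adjusting $c$ slightly to absorb the ceiling yields $T_k \le c' k^2 \log k$, as claimed.

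I expect no genuine obstacle here, since all of the mathematical content lives in Theorem~\ref{thm:APmain} and Corollary~\ref{subRamsey}. The single point that warrants care is the restriction step: one must verify that passing from the equinumerous constraint on $[tm]$ to the weaker ``at most $m$ occurrences per color'' hypothesis on $[sr(m,k)]$ is legitimate, and that a rainbow progression produced on the sub-interval lifts verbatim to $[tm]$, rather than attempting the reverse passage from a bounded-multiplicity coloring to an equinumerous one.
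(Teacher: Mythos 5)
Your proposal is correct and follows exactly the route the paper takes: the paper's entire argument is the sentence preceding the corollary, namely that an equinumerous $t$-coloring of $[tm]$ uses each color exactly $m$ times, so $T_k \leq \max_m sr(m,k)/m$, and then Corollary~\ref{subRamsey} is applied. You have merely (and correctly) filled in the restriction-to-$[sr(m,k)]$ detail that the paper leaves implicit.
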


Motivated by the set mapping problem of Erd\H{o}s and Hajnal, Caro \cite{C87} proved that for every positive integer $k$, there is a minimum integer $n_0 = n_0(k)$ such that, for all $n \geq n_0$ and every permutation $\pi:[n] \to [n]$, there is a $k$-term arithmetic progression $A$ such that $\pi(i) \not \in A$ for all $i \in A$.  Moreover, he showed that there are constants $c, c' > 0$ such that $c' k^2/\log k \leq n_0(k) \leq k^2 2^{c \log k / \log \log k}$. Alon et al.~\cite{ACT} used the same methods they had used to bound $sr(m,k)$ to improve the earlier upper bound to $n_0(k) \leq c k^2 \log k$. Our result gives a simple alternative proof of this.

\begin{corollary}
There is a constant $c$ such that
$$n(k) \leq c k^2 \log k.$$
\end{corollary}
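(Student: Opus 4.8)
The plan is to deduce this from the Tur\'an-type Theorem~\ref{thm:APmain} by encoding the forbidden pairs of the permutation as a sparse graph. Given $\pi\colon[n]\to[n]$, I would define a graph $G$ on vertex set $[n]$ in which $i$ and $j$ are adjacent exactly when $j=\pi(i)$ or $i=\pi(j)$. The purpose of this definition is that independence translates directly into the condition we want: a set $A$ is independent in $G$ if and only if there is no $i\in A$ with $\pi(i)\in A$, that is, $\pi(i)\notin A$ for all $i\in A$. Thus an independent $k$-term arithmetic progression in $G$ is precisely a progression $A$ of the required kind.

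The decisive feature is that $G$ is extremely sparse. Since $\pi$ is a bijection, the only possible neighbours of a vertex $i$ are $\pi(i)$ and $\pi^{-1}(i)$, so $G$ has maximum degree at most two (indeed $G$ is contained in the disjoint union of cycles coming from the cycle decomposition of $\pi$), and in particular $e(G)\le n$. I would then set $c=\varepsilon^{-1}$, with $\varepsilon$ the constant of Theorem~\ref{thm:APmain}, so that whenever $n> c\,k^2\log k$ we have $e(G)\le n<\varepsilon\frac{n^2}{k^2\log k}$. Theorem~\ref{thm:APmain} then supplies an independent $k$-term arithmetic progression $A$, which by the previous paragraph satisfies $\pi(i)\notin A$ for every $i\in A$; hence $n_0(k)\le c\,k^2\log k$.

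The one point that genuinely needs care --- and which I expect to be the only real obstacle --- is the role of fixed points. If $\pi(i)=i$ then this contributes a loop rather than an edge, so $i$ becomes an isolated vertex of $G$ that may legitimately lie in an independent set even though it violates $\pi(i)\notin A$. This is exactly why the problem is most naturally phrased in the set-mapping language of Erd\H{o}s and Hajnal, where one assumes $i\notin f(i)$: taking $f(i)=\{\pi(i)\}$ forces $\pi$ to be fixed-point-free, $G$ to be loopless, and the argument above to go through verbatim. I therefore expect essentially all of the work to lie in checking that the graph encoding captures the freeness condition exactly and in this fixed-point bookkeeping, the appeal to Theorem~\ref{thm:APmain} and the bound $e(G)\le n$ being immediate.
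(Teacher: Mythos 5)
Your proof is correct and is essentially the paper's own argument: build the graph with edges $(i,\pi(i))$, note it has at most $n$ edges, and apply Theorem~\ref{thm:APmain}. Your remark about fixed points is a legitimate observation the paper glosses over --- the statement is indeed only sensible under the usual set-mapping convention that $\pi$ is fixed-point-free --- but this does not change the substance of the argument.
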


\begin{proof}
Consider the graph on $[n]$ with edges $(i,\pi(i))$ for $i \in [n]$. This graph has at most $n$ edges. By choosing $c$ large enough, we can make the number of edges such that Theorem \ref{thm:APmain} applies to give an independent arithmetic progression in this graph. This arithmetic progression has the required property.
\end{proof}

\vspace{2mm}
\noindent
{\bf Acknowledgements.} This note was first written in May 2015, predating a recent paper of Geneson~\cite{G18} showing that $T_k \leq k^{5/2 + o(1)}$, and will form part of the forthcoming paper {\it Short proofs of some extremal results III}. We would like to thank Kevin Ford for some helpful discussions. We would also like to mention that recently J\'ozsef Balogh, Will Linz and Mina Nahvi independently investigated the question of estimating $T_k$ and showed that $T_k = k^{2 + o(1)}$.

\end{document}